\numberwithin{equation}{section} 
\newtheorem{lemma}{Lemma}[section]
\newtheorem{theorem}[lemma]{Theorem}
\theoremstyle{definition}
\newtheorem{remark}[lemma]{Remark}
\newlist{thm_enum}{enumerate}{1}
\setlist[thm_enum]{label=\normalfont(\alph*)}
\newlist{def_enum}{enumerate}{1}
\setlist[def_enum]{label=\normalfont(\roman*)}
\newlist{equiv_enum}{enumerate}{1}
\setlist[equiv_enum]{label=\normalfont(\roman*)}
\newcommand{\IN}{\mathbb{N}}
\newcommand{\IR}{\mathbb{R}}
\newcommand{\abs}[1]{\left\lvert#1\right\rvert}
\newcommand{\normalabs}[1]{\lvert#1\rvert}
\newcommand{\norm}[1]{\left\lVert#1\right\rVert}
\newcommand{\normalnorm}[1]{\lVert#1\rVert}
\newcommand{\R}[2][\empty]{
	\ifthenelse{\equal{#1}{\empty}}
		{\mathcal{R}\left\{#2\right\}}
		{\mathcal{R}_{#1}\left\{#2\right\}}
}
\newcommand{\LeftEqNo}{\let\veqno\@@leqno}
\renewcommand{\d}{\mathop{}\!d}
\renewcommand{\epsilon}{\varepsilon}
\renewcommand{\phi}{\varphi}
\DeclareMathOperator{\Id}{Id}
\DeclareMathOperator{\Ker}{Ker}
\DeclareMathOperator{\Rg}{Rg}
\DeclareMathOperator{\sinc}{sinc}
\begin{document}

\title[A Short Counterexample to the Inverse Generator Problem]{A Short Counterexample to the Inverse Generator Problem on non-Hilbertian Reflexive $L^p$-spaces}

\begin{abstract}
	We show that the bilateral shift group on $L^p(\IR)$ for $p \in (1, \infty) \setminus \{2\}$ provides a counterexample to the inverse generator problem.
\end{abstract}

\author{Stephan Fackler}
\address{Institute of Applied Analysis, University of Ulm, Helmholtzstr.\ 18, 89069 Ulm}
\email{stephan.fackler@uni-ulm.de}
\thanks{The author is grateful to Jan Rozendaal for introducing him to the inverse generator problem.}
\keywords{inverse generator, transference principle}
\subjclass[2010]{Primary 47D05.}

\maketitle

\section{Introduction}

	Let $A$ be the generator of a bounded strongly continuous $C_0$-semigroup on some Banach space. Further suppose that $A$ is one-to-one and has dense range. Then the inverse $A^{-1}$ is again a well-defined closed densely defined operator with dense range. The \emph{inverse generator problem} asks whether $A^{-1}$ is a generator (of a $C_0$-semigroup) as well.
	
	This problem seems to go back to a question posed by deLaubenfels at the end of \cite{Lau88}. There it was explicitly asked in the Hilbert space case. In this form the question is open up to today. However, it was immediately noticed that the same question is also very interesting in the Banach space setting and that the problem has a positive answer if $A$ is additionally assumed to generate an analytic semigroup bounded on some sector. This follows from the fact that if $A$ is sectorial, then $A^{-1}$ is sectorial of the same angle. However, for generators of non-analytic semigroups no such easy approach exists.
	
	And in fact, a negative answer to the problem on $c_0$ can be found in \cite[pages 343--344]{Kom66}. Probably first being unaware of Komatsu's counterexample, Zwart initiated a systematic study of the problem in \cite{Zwa07}, hereby giving a counterexample on an infinite Hilbert sum of $C_0([0,1))$-spaces. However, the reflexive case needs more effort. Subsequently, a counterexample to the problem on $L^p$-spaces for $p \in (1, \infty) \setminus \{2\}$ was given in \cite{GomZvaTom07}. There it was shown with rather lengthy calculations that Komatsu's generator also is a counterexample on $\ell^p$ for $p \in (1, \infty) \setminus \{2\}$. 
	
	In this short note we give a different and short counterexample to the inverse generator problem on $L^p$ for $p \in (1,\infty) \setminus \{2\}$ based on Fourier analytic methods. The choice of the counterexample is far from arbitrary and is in fact naturally suggested by general principles.

	The following sections~\ref{sec:motivation} on transference and~\ref{sec:the_counterexample} with the actual counterexample can be read independently. However, as just promised, Section~\ref{sec:motivation} gives a clear motivation for the counterexample considered in Section~\ref{sec:the_counterexample}.

\section{The Inverse Generator Problem and the Transference Principle}\label{sec:motivation}

	In this section we show that the choice of the counterexample in Section~\ref{sec:the_counterexample} is natural. Let us for a moment work purely formally. One has $e^{tA^{-1}} = f_t(-A)$ for the function $f_t(z) = e^{-t/z}$ bounded on the right half plane. Note that on the vertical axis one has $f_t(is) = e^{i t/s}$, a strongly oscillating function whose properties will be crucial later. Taking the inverse Laplace transform of $f_t$ in the sense of distributions we obtain
		\begin{align*}
			(\mathcal{L}^{-1} f_t)(s) = \delta_0(s) - \sqrt{t} \frac{J_1(2 \sqrt{ts})}{\sqrt{s}} \eqqcolon \delta_0(s) + b_t(s),
		\end{align*}
	where $J_1$ is the Bessel function of the first kind and of the first order. Hence, rewriting the holomorphic functional calculus in terms of the Hille--Phillips calculus via the Laplace transform~\cite[Lemma~2.12]{Mer99}, one obtains the formula
		\begin{equation}
			\label{eq:formula_inverse}
			e^{tA^{-1}} x = \int_0^{\infty} (\mathcal{L}^{-1} f_t)(s) T(s)x \d s = x + \lim_{R \to \infty} \int_0^{R} b_t(s) T(s)x \d s.
		\end{equation}
	Of course, it is not clear whether the limit of the singular integral on the right hand side exists. 
	
	Now mathematically rigorously, one sees that if $(T(t))_{t \ge 0}$ is exponentially stable and thus $A^{-1}$ is a bounded operator and a fortiori a generator, the integral converges absolutely. In this case one can directly verify that formula \eqref{eq:formula_inverse} holds~\cite[Lemma~3.2]{Zwa07}. One therefore has for a bounded semigroup $(T(t))_{t \ge 0}$ the identity
		\begin{equation}
			\label{eq:inverse_semigroup}
			e^{t(A-\epsilon)^{-1}} x = x + \int_0^{\infty} b_t(s) e^{-\epsilon s} T(s)x \d s.
		\end{equation}
	For the limit $\epsilon \downarrow 0$ one has the following result.
		
	\begin{theorem}\label{thm:inverse_generator}
		Let $A$ be an injective operator with dense range on some Banach space $X$ and assume that $A$ generates a bounded semigroup $(T(t))_{t \ge 0}$. Further, choose $(\epsilon_n)_{n \in \IN} \subset \IR_{>0}$ with $\epsilon_n \to 0$. Then the following are equivalent.
		\begin{equiv_enum}
			\item\label{equiv:bounded} $\norm{\int_0^{\infty} b_t(s) e^{-\epsilon_n s} T(s) \d s}$ is uniformly bounded in $n \in \IN$ and in $t$ from compact subsets of $\IR_{\ge 0}$.
			\item\label{equiv:convergence} For all $x \in X$ the sequence $(e^{t(A-\epsilon_n)^{-1}}x)_{n \in \IN}$ converges uniformly on compact subsets of $\IR_{\ge 0}$.
		\end{equiv_enum}
		If one of the above conditions~\ref{equiv:bounded} or~\ref{equiv:convergence} holds true , then $A^{-1}$ generates the semigroup $S(t)x = \lim_{n \to \infty} e^{t(A-\epsilon_n)^{-1}}x$.
	\end{theorem}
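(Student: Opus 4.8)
The plan is to exploit the identity~\eqref{eq:inverse_semigroup}, which already expresses each approximating semigroup $e^{t(A-\epsilon_n)^{-1}}$ explicitly, and to show that the two conditions are linked by a standard uniform-boundedness-plus-convergence mechanism. First I would record that by~\eqref{eq:inverse_semigroup} one has, for every $x \in X$,
	\begin{equation*}
		e^{t(A-\epsilon_n)^{-1}}x - x = \int_0^{\infty} b_t(s) e^{-\epsilon_n s} T(s)x \d s =: R_n(t)x,
	\end{equation*}
so that the family of operators $R_n(t)$ is exactly the one whose norms appear in~\ref{equiv:bounded}. The implication \ref{equiv:convergence}$\Rightarrow$\ref{equiv:bounded} is then immediate from the uniform boundedness principle: if $(e^{t(A-\epsilon_n)^{-1}}x)_n$ converges uniformly on compact $t$-intervals for each fixed $x$, then in particular $\sup_{n}\sup_{t \in K}\norm{R_n(t)x} < \infty$ for every $x$ and every compact $K \subset \IR_{\ge 0}$, and Banach--Steinhaus upgrades this pointwise bound to a uniform operator-norm bound, which is precisely~\ref{equiv:bounded}.

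For the converse \ref{equiv:bounded}$\Rightarrow$\ref{equiv:convergence}, the idea is that uniform boundedness reduces convergence to convergence on a dense subset, and on such a subset convergence can be verified by hand. I would take the dense set $D = \Rg(A) \cap \{x : Ax \in \Rg(A)\}$ (or simply a suitable core on which the singular integral is manifestly well behaved); for $x$ in this set the integrand $b_t(s) T(s)x$ is integrable in $s$ independently of the regularisation, so the limit $n \to \infty$ can be passed under the integral by dominated convergence, giving uniform-in-$t$ convergence of $R_n(t)x$ on compact sets. Combined with the uniform norm bound $\sup_n \sup_{t \in K}\norm{R_n(t)} =: M_K < \infty$ from~\ref{equiv:bounded}, an $\epsilon/3$-argument extends convergence from $D$ to all of $X$: given $x \in X$ and $\delta > 0$, choose $y \in D$ with $\norm{x-y}$ small enough that $M_K\norm{x-y}$ controls two of the three terms, and use convergence at $y$ for the middle term.

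It remains to identify the limit $S(t)x := \lim_n e^{t(A-\epsilon_n)^{-1}}x$ as a $C_0$-semigroup generated by $A^{-1}$. Here I would argue that $S(t)$ inherits strong continuity and the semigroup law from the approximants: each $e^{t(A-\epsilon_n)^{-1}}$ is a $C_0$-semigroup (as $(A-\epsilon_n)$ generates an exponentially stable semigroup, so $(A-\epsilon_n)^{-1}$ is bounded), and uniform convergence on compact $t$-intervals together with the uniform bound $M_K$ passes both the functional equation $S(t+s) = S(t)S(s)$ and the strong continuity $S(t)x \to x$ to the limit. To see that the generator is $A^{-1}$, I would compute resolvents: the resolvent $(\lambda - (A-\epsilon_n)^{-1})^{-1}$ converges strongly to a limit which, by continuity of the functional calculus and the convergence $(A-\epsilon_n)^{-1} \to A^{-1}$ on the range, must be $(\lambda - A^{-1})^{-1}$, and strong convergence of resolvents of equibounded semigroups identifies the limiting generator. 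The main obstacle I anticipate is the interchange of limit and singular integral in the step \ref{equiv:bounded}$\Rightarrow$\ref{equiv:convergence}: because $b_t(s) = -\sqrt{t}\,J_1(2\sqrt{ts})/\sqrt{s}$ is only conditionally integrable (the Bessel function decays like $s^{-3/4}$), dominated convergence is not available on all of $X$, which is exactly why the uniform bound in~\ref{equiv:bounded} is needed and why one must first establish convergence on a dense core where the oscillatory cancellation can be made explicit.
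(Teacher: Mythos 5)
Your direction \ref{equiv:convergence} $\Rightarrow$ \ref{equiv:bounded} (uniform boundedness principle applied to $R_n(t) = e^{t(A-\epsilon_n)^{-1}} - \Id$ via \eqref{eq:inverse_semigroup}) is exactly the paper's argument. The converse direction, however, has a genuine gap at its central step. You propose to prove convergence of $R_n(t)x$ on a dense core by dominated convergence, claiming that for $x$ in a set such as $\Rg(A) \cap \{x : Ax \in \Rg(A)\}$ the integrand $b_t(s)T(s)x$ is absolutely integrable. This is false in general: $\normalabs{b_t(s)} = \sqrt{t}\,\normalabs{J_1(2\sqrt{ts})}/\sqrt{s} \sim c\, t^{1/4} s^{-3/4}$ as $s \to \infty$, and for a general bounded semigroup $\norm{T(s)x}$ need not decay on \emph{any} dense set (take $A = i\Id$, which is injective with dense range and generates the bounded group $T(s) = e^{is}\Id$; then $\norm{b_t(s)T(s)x} = \normalabs{b_t(s)}\norm{x}$ is never integrable). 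So the integral converges only conditionally, through oscillatory cancellation. You acknowledge this in your final sentence, but the replacement argument --- which is the actual mathematical content of this implication --- is never supplied. It can be done (e.g.\ by writing $x = Ay$ and integrating by parts against $\frac{d}{ds}\bigl(e^{-\epsilon_n s}T(s)y\bigr)$, using that $b_t'$ \emph{is} absolutely integrable), but as written the proof does not close.

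The paper sidesteps the singular integral entirely in this direction: from \ref{equiv:bounded} and \eqref{eq:inverse_semigroup} it extracts only the uniform exponential bound $\normalnorm{e^{t(A-\epsilon_n)^{-1}}} \le Me^{\omega t}$, then observes the elementary \emph{generator} convergence
\begin{equation*}
	(A - \epsilon_n)^{-1}Ax = x + \epsilon_n (A-\epsilon_n)^{-1}x \longrightarrow x = A^{-1}(Ax)
\end{equation*}
on the dense set $\Rg(A) = D(A^{-1})$, notes $\IR_{>0} \subset \rho(A^{-1})$, and invokes the Trotter--Kato approximation theorem. That theorem simultaneously yields the locally uniform convergence of the semigroups \emph{and} the identification of the limit's generator as $A^{-1}$ --- the two things you were planning to establish by hand via an $\epsilon/3$-argument and a resolvent computation. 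If you prefer to avoid citing Trotter--Kato, you must carry out the integration-by-parts argument on $\Rg(A)$ explicitly; the dominated convergence shortcut is not available.
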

	\begin{proof}
		\ref{equiv:bounded} $\Rightarrow$ \ref{equiv:convergence}: In virtue of \eqref{eq:inverse_semigroup} we see that there exist $M \ge 1$ and $\omega \ge 0$ with
			\[
				\normalnorm{e^{t(A-\epsilon_n)^{-1}}} \le M e^{\omega t} \qquad \text{for all } n \in \IN.
			\]
		Moreover, one has for all $Ax = y \in D(A^{-1}) = \Rg(A)$
			\[
				(A-\epsilon_n)^{-1}y = (A-\epsilon_n)^{-1} Ax = x + \epsilon_n (A - \epsilon_n)^{-1}x \xrightarrow[n \to \infty]{} x = A^{-1}y.
			\]
		Further, $\IR_{> 0} \subset \rho(A^{-1})$ and therefore by \cite[Theorem~III.4.9]{EngNag00} property \ref{equiv:convergence} holds and the limit semigroup is generated by $A^{-1}$.
		
		\ref{equiv:convergence} $\Rightarrow$ \ref{equiv:bounded}: For $t_0 > 0$ the set $\{ e^{t(A-\epsilon_n)^{-1}}x: n \in \IN, t \in [0,t_0] \}$ is bounded for all $x \in X$. Property \ref{equiv:bounded} follows from the uniform boundedness principle and \eqref{eq:inverse_semigroup}.   
	\end{proof}
			
	Now, suppose that $(T(t))_{t \ge 0}$ is a $C_0$-semigroup on $L^p(\Omega)$ for some $p \in (1,\infty)$ which has a \emph{dilation}, which from now one means the following: there exist a $C_0$-group $(U(t))_{t \in \IR}$ with $\norm{U(t)} \le M$ for all $t \in \IR$ on some second $L^p$-space $L^p(\hat{\Omega})$, bounded linear operators $J\colon L^p(\Omega) \to L^p(\hat{\Omega})$ and $Q\colon L^p(\hat{\Omega}) \to L^p(\Omega)$ with $T(t) = QU(t)J$ for all $t \ge 0$. It then follows that for all $t > 0$ and all $\epsilon > 0$
		\[
			\norm{\int_0^{\infty} b_t(s) e^{-\epsilon s} T(s) \d s} \le \norm{J} \norm{Q} \norm{\int_0^{\infty} b_t(s) e^{-\epsilon s} U(s) \d s}.
		\]
	Further, it follows from the transference principle of Coifman--Weiss \cite[Theorem~2.4]{CoiWei76} that
		\[
			\norm{\int_0^{\infty} b_t(s) e^{-\epsilon s} U(s) \d s} \le M^2 \norm{\int_0^{\infty} b_t(s) e^{-\epsilon s} S(s) \d s},
		\]
	where $(S(s))_{s \in \IR}$ is the shift group on $L^p(\IR)$. Hence, the estimate in Theorem~\ref{thm:inverse_generator} for the shift group would be automatically inherited by all semigroups with a dilation. We now consider different instances of this principle.
	
	We start with the Hilbert space case. Of course, every Hilbert space is isomorphic to some $L^2$-space and so the above considerations apply. For Hilbert spaces it is known that a $C_0$-semigroup has a dilation if and only if it is similar to a contractive semigroup (for the discrete version see~\cite[Remark~4.3]{ArhMer14}, for a proof of the non-standard part for the continuous version follow the arguments in the proof of~\cite[Theorem~5.1]{Fac14c}). Further using that $-d/dx$ is unitarily equivalent to the multiplication operator with $x \mapsto -ix$, one can verify the required estimates of Theorem~\ref{thm:inverse_generator} if $A = -d/dx$ on $L^2(\IR)$. Hence, we reproduce \cite[Lemma~4.1]{Zwa07}.
	
	\begin{theorem}
		Let $A$ be an injective operator on some Hilbert space that generates a $C_0$-semigroup that is similar to a contractive semigroup. Then the inverse $A^{-1}$ is a generator.
	\end{theorem}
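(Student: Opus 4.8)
The plan is to reduce the Hilbert space statement to verifying the uniform boundedness condition~\ref{equiv:bounded} of Theorem~\ref{thm:inverse_generator} for the shift group on $L^2(\IR)$, and then to transfer this estimate to the given semigroup via the dilation machinery developed in this section. First I would recall that, by the quoted characterization, a $C_0$-semigroup on a Hilbert space which is similar to a contractive semigroup admits a dilation to a bounded $C_0$-group on a second $L^2$-space; hence the displayed transference chain applies and it suffices to bound the operators $\int_0^\infty b_t(s) e^{-\epsilon s} S(s) \d s$ uniformly in $\epsilon$ and in $t$ from compact sets, where $(S(s))_{s \in \IR}$ is the shift group on $L^2(\IR)$.

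The key observation is that the shift group on $L^2(\IR)$ is, via the Fourier transform, unitarily equivalent to the multiplication group by $e^{-is\xi}$ (equivalently, its generator $-d/dx$ is unitarily equivalent to multiplication by $-i\xi$). Under this identification the operator $\int_0^\infty b_t(s) e^{-\epsilon s} S(s) \d s$ becomes multiplication by the scalar symbol
\begin{equation*}
	m_{t,\epsilon}(\xi) = \int_0^\infty b_t(s) e^{-\epsilon s} e^{-is\xi} \d s,
\end{equation*}
and since multiplication operators on $L^2$ have operator norm equal to the $L^\infty$-norm of their symbol, condition~\ref{equiv:bounded} is equivalent to a uniform bound on $\norm{m_{t,\epsilon}}_\infty$. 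By the formal Laplace-transform computation of Section~\ref{sec:motivation} the symbol $m_{t,\epsilon}(\xi)$ is precisely the regularized boundary value of $f_t(z) = e^{-t/z}$, so one expects $m_{t,\epsilon}(\xi) \to f_t(i\xi) - 1 = e^{it/\xi} - 1$ as $\epsilon \downarrow 0$; the point is that this boundary symbol is uniformly bounded, with modulus at most $2$, locally uniformly in $t$.

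The main technical step, and the one I expect to be the real obstacle, is to justify this boundary behaviour rigorously and to obtain the bound \emph{uniform} in $\epsilon$: one must control the regularized singular integral $\int_0^R b_t(s) e^{-\epsilon s} e^{-is\xi} \d s$ using the oscillation of $e^{-is\xi}$ against the decay and oscillation of the Bessel-type kernel $b_t(s) = -\sqrt{t}\, J_1(2\sqrt{ts})/\sqrt{s}$. Here the asymptotics $J_1(r) \sim \sqrt{2/(\pi r)}\cos(r - 3\pi/4)$ give $b_t(s) = O(s^{-3/4})$ with an oscillatory phase, so the integral converges only conditionally; the estimate requires integration by parts or a stationary-phase argument and an appeal to the identity $e^{t(A-\epsilon)^{-1}} = \mathrm{Id} + \int_0^\infty b_t(s) e^{-\epsilon s} S(s) \d s$ from~\eqref{eq:inverse_semigroup} to identify the symbol with $e^{-t/(\epsilon + i\xi)}$, which is manifestly bounded by $1$ uniformly in $\epsilon \ge 0$ and $\xi \in \IR$.

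Once the uniform bound $\norm{m_{t,\epsilon}}_\infty \le 1$ is established on $L^2(\IR)$, condition~\ref{equiv:bounded} holds for the shift group; the transference estimate then yields the same uniform boundedness for $\int_0^\infty b_t(s) e^{-\epsilon_n s} T(s) \d s$ for any dilatable semigroup, in particular for any semigroup on a Hilbert space that is similar to a contraction semigroup. Applying the concluding statement of Theorem~\ref{thm:inverse_generator} then shows that $A^{-1}$ generates the limit semigroup, which proves the theorem and recovers \cite[Lemma~4.1]{Zwa07}.
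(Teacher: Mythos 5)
Your proposal follows essentially the same route as the paper: reduce via the dilation characterization and the Coifman--Weiss transference chain to the shift group on $L^2(\IR)$, then use the unitary equivalence of $-d/dx$ with multiplication by $-i\xi$ so that \eqref{eq:inverse_semigroup} identifies the relevant operator as multiplication by $e^{-t/(\epsilon+i\xi)}-1$, whose $L^\infty$-bound of $2$ gives condition~\ref{equiv:bounded} of Theorem~\ref{thm:inverse_generator} uniformly in $\epsilon$ and locally uniformly in $t$. Your worry about controlling the oscillatory Bessel integral directly is unnecessary for exactly the reason you note at the end --- the identity \eqref{eq:inverse_semigroup} holds rigorously for each $\epsilon>0$, so the symbol identification is immediate --- and the argument is correct.
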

	
	\begin{remark}
		The above theorem immediately follows from the Lumer--Phillips theorem as already noted by Zwart. However, our approach has the advantage that it gives a common framework for the reflexive $L^p$-scale and in this way immediately guides us to a counterexample on $L^p$ for $p \neq 2$.
	\end{remark}
	
	Further, for $p \in (1,\infty)$ every positive contractive semigroup on $L^p$ has a dilation by \cite{Fen97}. This strongly suggests that one should study the inverse of the generator of the shift group on $L^p(\IR)$, the task of the next section.

\section{The Inverse Generator of the Shift Group}\label{sec:the_counterexample}

	We now consider the shift group $(S(t)f)(s) = f(s-t)$ on $L^p(\IR)$ for $p \in (1, \infty)$. Its generator is $A_p = -\frac{d}{dx}$ with domain $D(A_p) = W^{1,p}(\IR)$. It is easy to see that $A_p$ is densely defined, injective and has dense range (in fact, the last two are equivalent because of the direct decomposition $L^p(\IR) = \Ker A_p \oplus \overline{\Rg}(A_p)$~\cite[Proposition~15.2]{KunWei04}). Hence, $A_p^{-1}$ is a well-defined closed operator. In this section we answer the question whether $A_p^{-1}$ generates a $C_0$-semigroup on $L^p(\IR)$.
	
	Note that under the Fourier transform $\mathcal{F}$ one has $A_pf = \mathcal{F}^{-1} (x \mapsto -ix (\mathcal{F}f)(x))$ for all $f \in \mathcal{S}(\IR)$, the space of Schwartz functions. In other words, the operator $A_p$ acts as multiplication with the function $x \mapsto -ix$ on the Fourier side. This implies that if $A_p^{-1}$ generates a $C_0$-semigroup on $L^p(\IR)$, then for all $t > 0$ 
		\[
			\mathcal{S}(\IR) \ni f \mapsto \mathcal{F}^{-1}(x \mapsto e^{i (1/x) t} (\mathcal{F}f)(x)) \in \mathcal{S'}(\IR)
		\]
	induces a bounded operator on $L^p(\IR)$, where $\mathcal{S}'(\IR)$ is the space of tempered distributions.
	
	We now put this problem in the appropriate Fourier analytic context. For details and proofs we refer to \cite{Gra08}, in particular to Section~2.5 on multipliers. We denote by $\mathcal{M}_p$ the space of all bounded measurable functions $m\colon \IR \to \IR$ for which the linear operator
		\[
			T_m\colon f \mapsto \mathcal{F}^{-1}(m \cdot \mathcal{F}f) \in \mathcal{S}'(\IR)
		\]
	initially only defined on the space of Schwartz functions induces a bounded operator $L^p(\IR) \to L^p(\IR)$. By setting $\norm{m}_{\mathcal{M}_p} \coloneqq \norm{T_m}_{\mathcal{B}(L^p(\IR))}$ the bounded Fourier multipliers $\mathcal{M}_p$ on $L^p(\IR)$ form a Banach algebra which has the following properties for a given $m \in L^{\infty}(\IR)$:
	
	\begin{enumerate}
		\item\label{multiplier:conjugate} Let $\frac{1}{p} + \frac{1}{p'} = 1$ be conjugated Hölder indices. Then $m \in \mathcal{M}_p$ if and only if $m \in \mathcal{M}_{p'}$. If one of these conditions holds, then one has $\norm{m}_{\mathcal{M}_p} = \norm{m}_{\mathcal{M}_{p'}}$.
		\item Let $\tilde{m}(x) = m(-x)$ be the reflection of $m$. Then $m \in \mathcal{M}_p$ if and only if $\tilde{m} \in \mathcal{M}_p$. If one of these conditions holds, then one has $\norm{m}_{\mathcal{M}_p} = \norm{\tilde{m}}_{\mathcal{M}_p}$.
		\item The set of all $p \in [1, \infty)$ for which $m \in \mathcal{M}_p$ is an interval containing 2.
	\end{enumerate}

	The last point follows from the Riesz--Thorin interpolation theorem. Further, $\mathcal{M}_2 = L^{\infty}(\IR)$ by Plancherel's theorem. This implies that $A_2^{-1}$ generates a contractive $C_0$-semigroup on $L^2(\IR)$. 
	
	Let us now come to the case $p \neq 2$ for which we will see that $A_p^{-1}$ does not generate a $C_0$-semigroup on $L^p(\IR)$. The proof uses the following estimate for oscillatory integrals, the van der Corput lemma \cite[page~332]{Ste93}.
	
	\begin{lemma}\label{lem:van_der_corput}
		Let $\Phi\colon (a,b) \to \IR$ be a smooth phase function on a non-empty open interval with $\normalabs{\Phi^{(k)}(x)} \ge \rho$ for some natural number $k \ge 2$ and $\rho > 0$. Then there exists a universal constant $M_k > 0$ such that
			\[
				\abs{\int_{a}^b e^{i \Phi(x)} \d x} \le M_k \rho^{-1/k}.
			\]
	\end{lemma}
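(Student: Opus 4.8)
The plan is to prove the estimate by induction on the order $k$, following the classical argument. The true base of the induction is the first-order case: if $\Phi$ is smooth with $\Phi'$ monotone and $\normalabs{\Phi'(x)} \ge \rho$ on $(a,b)$, then $\abs{\int_a^b e^{i\Phi(x)}\d x} \le 3\rho^{-1}$. First I would establish this by writing $e^{i\Phi} = (i\Phi')^{-1}\frac{d}{dx}e^{i\Phi}$ and integrating by parts,
\[
	\int_a^b e^{i\Phi(x)}\d x = \biggl[\frac{e^{i\Phi(x)}}{i\Phi'(x)}\biggr]_a^b - \int_a^b e^{i\Phi(x)}\,\frac{d}{dx}\Bigl(\frac{1}{i\Phi'(x)}\Bigr)\d x.
\]
The boundary term is bounded by $2\rho^{-1}$ using $\normalabs{\Phi'} \ge \rho$, and since $\Phi'$ has constant sign the function $1/\Phi'$ is monotone, so $\int_a^b \normalabs{(1/\Phi')'}\d x = \normalabs{1/\Phi'(b) - 1/\Phi'(a)} \le \rho^{-1}$; together this yields the first-order bound with $M_1 = 3$.

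For the inductive step, assume the assertion for order $k-1$ and let $\normalabs{\Phi^{(k)}} \ge \rho$ with $k \ge 2$. After possibly replacing $\Phi$ by $-\Phi$ I may assume $\Phi^{(k)} \ge \rho$, so that $\Phi^{(k-1)}$ is strictly increasing and grows at rate at least $\rho$. Fixing a threshold $\lambda > 0$, I would split $(a,b)$ into the set $E = \{x : \normalabs{\Phi^{(k-1)}(x)} < \lambda\}$, which by monotonicity is a single subinterval of length at most $2\lambda/\rho$, and its complement, which consists of at most two subintervals. On $E$ the integral is estimated trivially by its length $2\lambda/\rho$. On each complementary subinterval one has $\normalabs{\Phi^{(k-1)}} \ge \lambda$ with $\Phi^{(k-1)}$ monotone, so the induction hypothesis (which for $k = 2$ is exactly the monotone first-order case just proved) gives the bound $M_{k-1}\lambda^{-1/(k-1)}$.

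Collecting the pieces yields $\abs{\int_a^b e^{i\Phi}\d x} \le 2\lambda/\rho + 2 M_{k-1}\lambda^{-1/(k-1)}$, and the final step is to optimize in $\lambda$. Choosing $\lambda = \rho^{(k-1)/k}$ makes both terms equal to a multiple of $\rho^{-1/k}$ and produces the recursion $M_k = 2 + 2 M_{k-1}$, completing the induction. I expect the main obstacle to be the first-order base case rather than the induction itself: the geometric decomposition and the scaling optimization are routine bookkeeping, whereas the integration-by-parts step genuinely requires the monotonicity of $\Phi'$ to control the total variation of $1/\Phi'$, and one must check that this monotonicity is automatically available at every stage of the induction — which it is, since $\Phi^{(k)}$ having constant sign forces $\Phi^{(k-1)}$ to be monotone.
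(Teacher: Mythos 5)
Your proof is correct and is precisely the classical induction argument --- the first-order monotone base case via integration by parts, then the splitting into $\{x : \normalabs{\Phi^{(k-1)}(x)} < \lambda\}$ and its at most two complementary intervals, followed by optimization in $\lambda$ --- which is exactly the proof in the reference the paper cites for this lemma; the paper itself states the lemma without reproducing a proof. The only point worth a remark is that on an open (possibly unbounded) interval the boundary terms $e^{i\Phi(a)}/(i\Phi'(a))$, $e^{i\Phi(b)}/(i\Phi'(b))$ should be obtained by exhausting $(a,b)$ with compact subintervals and passing to the limit, since the bound is uniform; this is routine and does not affect the argument.
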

	
	We are now ready to give the following negative answer to the inverse generator problem on reflexive non-Hilbertian $L^p$-spaces.
	
	\begin{theorem}\label{thm:inverse_generator_shift}
		The operator $A_p^{-1}$ generates a $C_0$-semigroup on $L^p(\IR)$ if and only if $p = 2$.
	\end{theorem}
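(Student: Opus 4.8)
The plan is to reduce the non-generation statement to a statement about Fourier multipliers and then exploit a scaling symmetry together with the van der Corput estimate. As already noted in the excerpt, $A_2^{-1}$ generates a contractive semigroup, so only the implication ``$A_p^{-1}$ a generator $\Rightarrow p = 2$'' remains. Suppose then that $A_p^{-1}$ generates a $C_0$-semigroup $(U(t))_{t \ge 0}$ on $L^p(\IR)$. Since $A_p^{-1}$ acts as multiplication by $x \mapsto i/x$ on the Fourier side, it commutes with all translations; hence so does each $U(t)$, and comparing resolvents $R(\lambda, A_p^{-1}) = \int_0^{\infty} e^{-\lambda t} U(t) \d t$ with the multiplier $x \mapsto (\lambda - i/x)^{-1}$ and invoking uniqueness of the Laplace transform identifies $U(t) = T_{m_t}$ with $m_t(x) = e^{it/x}$. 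In particular $m_t \in \mathcal{M}_p$ for every $t > 0$, so it suffices to prove $m_t \notin \mathcal{M}_p$ whenever $p \neq 2$.

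First I would record the scaling symmetry. Writing $m(x) = e^{i/x}$, the relation $m_t(\lambda x) = m_{t/\lambda}(x)$ together with the dilation invariance of $\norm{\cdot}_{\mathcal{M}_p}$ shows that $\norm{m_t}_{\mathcal{M}_p}$ is independent of $t$, so I fix $t = 1$. The only obstruction sits at the frequency origin, where $m$ oscillates arbitrarily fast. To isolate it I localise to the dyadic intervals $I_k = [2^{-k}, 2^{-k+1}]$: since the characteristic function of an interval is a multiplier of norm at most some $C_p$ (boundedness of the Hilbert transform on $L^p$, $1 < p < \infty$) and $\mathcal{M}_p$ is a Banach algebra, one has $\norm{m \Ind_{I_k}}_{\mathcal{M}_p} \le C_p \norm{m}_{\mathcal{M}_p}$. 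Rescaling $I_k$ to $[1,2]$ via dilation invariance turns the left-hand side into $\norm{g_\lambda}_{\mathcal{M}_p}$ with $g_\lambda(u) = e^{i\lambda/u}\Ind_{[1,2]}(u)$ and $\lambda = 2^k$. Everything thus reduces to showing $\norm{g_\lambda}_{\mathcal{M}_p} \to \infty$ as $\lambda \to \infty$ for $p \neq 2$.

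To estimate this chirp multiplier I would compute its convolution kernel and test the operator on a single bump. By duality (Property~\ref{multiplier:conjugate}) it suffices to treat $p < 2$. Applying $T_{g_\lambda}$ to $f = \Ind_{[0,1]}$ and interchanging the order of integration gives
\[
	(T_{g_\lambda} f)(s) = \int_1^2 e^{i(\lambda/u + us)} \frac{1 - e^{-iu}}{iu} \d u,
\]
an oscillatory integral with phase $\Phi_s(u) = \lambda/u + us$ and smooth non-vanishing amplitude on $[1,2]$. Here $\abs{\Phi_s''(u)} = 2\lambda u^{-3}$ is comparable to $\lambda$ uniformly in $s$, so Lemma~\ref{lem:van_der_corput} with $k = 2$ gives $\abs{(T_{g_\lambda}f)(s)} \le M \lambda^{-1/2}$ for all $s$. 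The decisive point is the matching lower bound: whenever the stationary point $u^*(s) = (\lambda/s)^{1/2}$ lies in a fixed compact subinterval of $(1,2)$ — that is, for $s$ in a set of measure comparable to $\lambda$ — the stationary-phase principal term has modulus comparable to $\lambda^{-1/2}$, while the van der Corput bound forces the remainder to be of lower order. Hence $\abs{(T_{g_\lambda}f)(s)} \ge c\lambda^{-1/2}$ on a set of measure $\sim \lambda$, so $\norm{T_{g_\lambda}f}_p \ge c\lambda^{1/p - 1/2}$ while $\norm{f}_p = 1$. For $p < 2$ the exponent is positive, giving $\norm{g_\lambda}_{\mathcal{M}_p} \to \infty$; the case $p > 2$ follows by duality. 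This contradicts $\norm{m}_{\mathcal{M}_p} < \infty$, so $A_p^{-1}$ is not a generator for $p \neq 2$.

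I expect the genuine obstacle to be exactly this lower bound: Lemma~\ref{lem:van_der_corput} only delivers the upper estimate, so one must separately extract the principal stationary-phase contribution and verify that it is not annihilated by cancellation against the error term. Making the admissible range of $s$ and the constant $c$ explicit — rather than appealing to a soft stationary-phase asymptotic — is the technical heart of the argument; the remaining ingredients are the routine bookkeeping of the reductions above and the standard multiplier facts quoted from the excerpt.
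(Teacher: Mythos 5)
Your reduction to the multiplier statement, the dyadic localisation near the frequency origin, and the rescaling to the chirp $g_\lambda(u) = e^{i\lambda/u}\mathds{1}_{[1,2]}(u)$ are all sound, and your resolvent argument identifying the would-be semigroup with $T_{m_t}$ is if anything more careful than the paper's. But the step you yourself flag as the technical heart --- the lower bound $\abs{(T_{g_\lambda}f)(s)} \ge c\lambda^{-1/2}$ on a set of measure $\sim \lambda$ --- is left unproven, and the mechanism you invoke for it does not work as stated: Lemma~\ref{lem:van_der_corput} bounds the \emph{whole} oscillatory integral by $M\lambda^{-1/2}$ and says nothing about the remainder after subtracting the stationary-phase principal term; applied to the error it would give a bound of the \emph{same} order $\lambda^{-1/2}$, not a lower one. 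To close the gap you would need a quantitative stationary-phase lemma with explicit error, or a splitting into a neighbourhood of the critical point $u^*(s)=(\lambda/s)^{1/2}$ (contributing $\sim \lambda^{-1/2}$ by direct estimation) and its complement (contributing $O(\lambda^{-1})$ by non-stationary phase, using that $\abs{\Phi_s'} \gtrsim \lambda$ at the endpoints when $u^*$ stays in a fixed compact subinterval of $(1,2)$). This is doable but is genuinely extra work, and one must also watch the amplitude: with the $2\pi$-normalised Fourier transform the amplitude of $T_{g_\lambda}\mathds{1}_{[0,1]}$ is proportional to $\sin(\pi u)/u$, which vanishes at $u=1,2$, so the admissible range of $s$ must keep $u^*$ away from both endpoints and from the zeros.

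The paper's proof is engineered precisely so that no lower bound on the transformed function is ever needed. Since $m(x) = e^{i/x}$ is unimodular, its pointwise inverse equals its reflection $\tilde m$, which by property~(2) lies in $\mathcal{M}_p$ with the same norm whenever $m$ does; hence $T_m$ would be invertible and in particular bounded \emph{below}, $C^{-1}\norm{f}_p \le \norm{T_m f}_p$. One then tests on $f_I = \mathcal{F}^{-1}\mathds{1}_I$ for short intervals $I$ near the origin, where $\norm{f_I}_p = N_p\abs{I}^{(p-1)/p}$ is computed exactly, and needs only an \emph{upper} bound on $\norm{T_m f_I}_p$, obtained by interpolating the Plancherel identity $\norm{T_m f_I}_2 = \abs{I}^{1/2}$ against the van der Corput bound $\norm{T_m f_I}_\infty \lesssim \rho^{-1/2}$. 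Choosing $I = [2^{-1}\rho^{-1/3},\rho^{-1/3}]$ makes the two sides incompatible as $\rho \to \infty$ for $p>2$. So the invertibility trick converts your hard lower bound into a trivial one, at the price of working with the non-localised symbol; your localised approach would, once completed, yield the quantitative blow-up rate $\norm{g_\lambda}_{\mathcal{M}_p} \gtrsim \lambda^{\abs{1/p-1/2}}$, which is more information than the paper extracts, but as written the argument is not complete.
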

	\begin{proof}
		By the above considerations it suffices to show that for $p \in (2, \infty)$ the multiplier $m(x) = e^{i/x}$ is not bounded on $L^p(\IR)$. Assume that this would be the case. Then its inverse $m^{-1}(x) = e^{-i/x} = \tilde{m}(x)$ is a bounded multiplier as well. Hence, there exists a constant $C \ge 0$ such that for all $f \in L^p(\IR)$ one has
			\begin{equation}
				\label{eq:comparable}
				C^{-1} \norm{f}_p \le \norm{T_m f}_p \le C \norm{f}_p.
			\end{equation}
		Now let $I = [a,b]$ for $a < b$ be a non-empty interval and set $f_I = \mathcal{F}^{-1} \mathds{1}_{I}$. A short calculation yields
			\begin{align*}
				f_I(x) & = \int_a^b e^{2\pi i xy} \d y = \frac{1}{2\pi i x} e^{\pi i (a+b)x} \left( e^{\pi i (b-a)x} - e^{\pi i (a-b)x} \right) \\
				& = e^{\pi i (a+b)x} \frac{\sin(\pi (b-a) x)}{\pi x} = e^{\pi i (a+b)x} \abs{I} \sinc(\abs{I} x).
			\end{align*}
		Further, one has
			\begin{align*}
				\int_{-\infty}^{\infty} \abs{f_I(x)}^p \d x = \int_{-\infty}^{\infty} \abs{I}^p \abs{\sinc(\abs{I}x)}^p \d x = \abs{I}^{p-1} \int_{-\infty}^{\infty} \abs{\sinc(x)}^p \d x = \abs{I}^{p-1} N_p^p.
			\end{align*}
		Hence, $\norm{f_I}_p = \abs{I}^{(p-1)/p} N_p$ for a universal $p$-dependent constant $N_p > 0$. We now estimate $\norm{T_m f_I}_p$. For this observe that on the one hand we have by Plancherel's theorem
			\begin{align*}
				\norm{T_m f_I}_2 = \norm{m \mathds{1}_I}_2 = \norm{\mathds{1}_I}_2 = \abs{I}^{1/2}.
			\end{align*}
		On the other hand one may apply the van der Corput lemma (Lemma~\ref{lem:van_der_corput}) to the phase $\Phi_y(x) = \frac{1}{x} + 2\pi xy$, where $y \in \IR$. Then $\Phi_y''(x) = \frac{2}{x^3}$ independently of $y$ and consequently, if $I$ chosen such that $\normalabs{\Phi_y''(x)} \ge \rho$ for all $x \in I$, one obtains for all $y \in \IR$
			\begin{align*}
				\abs{(T_m f_I)(y)} = \abs{\int_{I} e^{i \Phi_y(x)} \d x} \le M \rho^{-1/2}
			\end{align*}			
		for some universal constant $M \ge 0$. This shows $\norm{T_m f_I}_{\infty} \le M \rho^{-1/2}$. Using the interpolation inequality, we now obtain
			\begin{align*}
				\norm{T_m f_I}_p \le \norm{T_m f_I}_{\infty}^{1-\frac{2}{p}} \norm{T_m f_I}_2^{\frac{2}{p}} \le M^{1-\frac{2}{p}}  \rho^{\frac{1}{p} - \frac{1}{2}} \abs{I}^{\frac{1}{p}}.
			\end{align*}
		Inserting this estimate in \eqref{eq:comparable}, we obtain for all intervals $I$ the inequality
			\begin{equation}
				\label{eq:bounded}
				C^{-1} \abs{I}^{\frac{p-1}{p}} N_p \le M^{1-\frac{2}{p}} \rho^{\frac{1}{p} - \frac{1}{2}} \abs{I}^{\frac{1}{p}} \qquad \Leftrightarrow \qquad \abs{I}^{\frac{p-2}{p}} \rho^{\frac{1}{2} - \frac{1}{p}} \le K
			\end{equation}
		for some universal constant $K > 0$. Now, we choose the order of the size of the interval $I$ maximal under the constraint $\normalabs{\Phi''_y(x)} \ge \rho$. To be explicit, one may use $I = [2^{-1} \rho^{-1/3},\rho^{-1/3}]$. Then $\abs{I} = 2^{-1} \rho^{-1/3}$ and \eqref{eq:bounded} implies that the function 
			\[
				\rho \mapsto \rho^{-\frac{1}{3} (1- \frac{2}{p})} \rho^{\frac{1}{2} - \frac{1}{p}} = \rho^{\frac{1}{6} - \frac{1}{3} \frac{1}{p}}
			\]
		is bounded on $(0, \infty)$. However, the exponent is positive because of $p > 2$ and we have therefore obtained a contradiction.
	\end{proof}
	
	\begin{remark}
		Counterexamples involving shifts have already been studied by different authors. In fact, the example given in~\cite{GomZvaTom07} is of the form $-\Id + S$, where $S$ is the right shift on $\ell^p(\IN)$. Further, it was shown in \cite{Lau09} that the inverse of $-d/dx$ on the closure of its image in $L^1([0,\infty))$ is not a generator. In a similar direction it was noticed in the comments of \cite{Gom11} that the inverse generator of the nilpotent shift on $L^p([0,1])$ generates a $C_0$-semigroup which is not bounded.
	\end{remark}

	\printbibliography

\end{document}